\newcommand{\R}{\mathbf{R}}
\renewcommand{\P}{\mathrm{P}}
\newcommand{\E}{\mathrm{E}}
\newcommand{\1}{\boldsymbol{1}}
\renewcommand{\d}{{\rm d}}
\renewcommand{\Re}{\mathrm{Re}}
\newcommand{\e}{{\rm e}}
\author{Daniel Conus\\University of Utah
\and Davar Khoshnevisan\\University of Utah}
\title{\bf Weak nonmild solutions to some SPDEs\thanks{%
	Research supported in part by the
	Swiss National Science Foundation Fellowship PBELP2-122879 (D.C.)
	and the NSF grant DMS-0706728 (D.K.).}}
\date{April 15, 2010}
\newtheorem{stat}{Statement}[section]
\newtheorem{proposition}[stat]{Proposition}
\newtheorem{theorem}[stat]{Theorem}
\newtheorem{lemma}[stat]{Lemma}
\theoremstyle{definition} 
\newtheorem{remark}[stat]{Remark}
\newtheorem{example}[stat]{Example}
\numberwithin{equation}{section}
\begin{document}\onehalfspacing
\maketitle
\begin{abstract}
We study the nonlinear stochastic heat equation driven by space-time 
white noise in the case that the initial datum $u_0$ is a 
(possibly signed) measure. In this case, one cannot obtain a 
mild random-field solution in the usual sense. We prove instead
that it is possible to establish the existence and uniqueness 
of a weak solution with values in a suitable function space. 
Our approach is based on a construction of 
a generalized definition of a stochastic convolution 
via Young-type inequalities. \\
	\noindent{\it Keywords:} Stochastic PDEs,
		weak solutions, measure-valued initial conditions.\\
	\noindent{\it \noindent AMS 2000 subject classification:}
	Primary 60H15; Secondary 35R60.
\end{abstract}

\section{Introduction}

Let us consider the nonlinear stochastic heat equation
\begin{equation}\label{heat}
	\frac{\partial}{\partial t}u_t(x)=(\mathcal{L}u_t)(x)+
	\sigma(u_t(x))\dot{W}(t\,,x) \qquad (t \geq 0, x \in \R),
\end{equation}
where: 
(i) $\mathcal{L}$ is the generator of a real-valued L\'evy process 
$\{X_t\}_{t \geq 0}$ with L\'evy exponent $\Psi$, normalized so that
$\E\e^{i\xi X_t}=\e^{-t\Psi(\xi)}$ for every $\xi\in\R$ and $t\ge 0$;
(ii) $\sigma:\R\to\R$ is Lipschitz continuous with Lipschitz constant 
${\rm Lip}_\sigma$; (iii) $\dot{W}$ is space-time white noise; and
(iv) The initial datum $u_0$ is a signed Borel measure on $\R$. 

Equation \eqref{heat} arises in many different contexts;
three notable examples are Bertini and Cancrini \cite{BC}, Gy\"ongy and 
Nualart \cite{GN}, and Carmona and Molchanov \cite{CM94}.

In the case that $u_0:\R\to\R_+$ is a bounded measurable function, 
the theory of Dalang \cite{Dalang} shows that there exists a unique 
random-field mild solution $\{u_t(x)\}_{t \geq 0, x \in \R}$ provided that
\begin{equation}\label{Dalang}
	\Upsilon(\beta):= \frac{1}{2\pi} \int_{-\infty}^\infty 
	\frac{\d\xi}{\beta+2\Re\Psi(\xi)}<\infty \quad
	\text{for some, hence all, $\beta>0$}.
\end{equation}
In general, Dalang's Condition  \eqref{Dalang} cannot be improved upon 
\cite{Dalang,PZ}. 

Dalang's condition \eqref{Dalang} implies also
that the L\'evy process $X$ has transition functions
$p_t(x)$ \cite[Lemma 8.1]{FKN}; i.e., for all
measurable $f:\R\to\R_+$,
\begin{equation}
	\E f(X_t)= \int_{-\infty}^\infty p_t(z)f(z)\,\d z
	\qquad\text{for all $t>0$}.
\end{equation}
A \emph{mild solution} in this setting is a random field 
$\{u_t(x)\}_{t \geq 0, x \in \R}$ that satisfies
\begin{equation} \label{mild_sol}
	u_t(x) = (P_tu_0)(x) + \int_{[0,t]\times\R} p_{t-s}(y-x) \sigma(u_s(y))\,W(\d s\,\d y)
	\quad\text{a.s.},
\end{equation}
for all $t \geq 0$ and $x \in \R$, where $\{P_t\}_{t \geq 0}$ 
denotes the semigroup associated to the process $X$, 
and the stochastic integral is understood as a
Walsh martingale-measure stochastic integral \cite{walsh}. 
Notice that \eqref{mild_sol} can be rewritten in the following form:
For all $(t\,,x)\in\R_+\times\R$,
\begin{equation} \label{mild_star_form}
	u_t(x) = (P_tu_0)(x) + \left( \tilde{p}*(\sigma\circ u)\dot W \right)_t(x)
	\quad\text{a.s.},
\end{equation}
where ``$*$'' denotes ``stochastic convolution''; see \eqref{eq:SC} below and $\tilde{p}_t(x) = p_t(-x)$ for all $x \in \R$.

In the case that $u_0$ is not a bounded and measurable function, but 
instead a (possibly signed) Borel measure on $\R$, the solution $u$ 
cannot be defined as a random field, but has to be considered as a 
process of  function-space type. As a consequence, the stochastic 
convolution in \eqref{mild_sol} is not well defined in the sense of 
Walsh. Section \ref{sec:gen_conv} below is devoted to extending 
the definition of the stochastic convolution of a process $\Gamma$ 
with respect to $Z\dot{W}$ in the case that $Z$ is in a suitable 
Banach space $\bm{B}^k_{\beta,\eta}$ of random processes. 
The key step of this extension involves developing a kind of
``stochastic Young inequality''
(Proposition \ref{pr:SY}). Such an inequality appeared earlier
in \cite{Conus_Khoshnevisan}, in a different context, in order to obtain intermittency 
properties for equation \eqref{heat} in the case that $u_0$ is a lower semicontinuous
bounded function of compact support.

In Section \ref{sec:eu} we establish the existence and uniqueness 
of a weak solution to \eqref{heat}. Namely, we prove that
Dalang's condition \eqref{Dalang} implies that if 
$u_0 = \mu$ is a (possibly signed) Borel measure on $\R$ 
that satisfies a suitable integrability condition \eqref{cond:u0},
then there exists a unique $u \in \bm{B}^k_{\beta,\eta}$ such that
$u$ almost surely satisfies \eqref{mild_star_form} for almost every 
$t \geq 0$ and $x \in \R$. This solution is \emph{not} a random field;
but it \emph{is} a function-space-valued solution. 

In Section \ref{sec:sharp} we prove that our condition for existence 
and uniqueness is unimprovable. And in Section \ref{sec:last} we 
mention briefly examples of initial data $u_0$ that lead to the
existence and uniqueness of a weak solution to \eqref{heat}, together
with further remarks that explain what happens if we study
the 1-D stochastic wave equation in place of \eqref{heat}.

\section{Generalized stochastic convolutions} \label{sec:gen_conv}

Let $\Gamma:(0\,,\infty)\times\R\to\R$ be measurable, and 
$Z:=\{Z_t(x)\}_{t>0,x\in\R}$ be a predictable random field
in the sense of Walsh \cite[p.\ 292]{walsh}. Let us define
the \emph{stochastic convolution} 
$\Gamma*Z\dot{W}$ of the process $\Gamma$ with 
the noise $Z\dot{W}$ as the predictable random field
\begin{equation}\label{eq:SC}
	(\Gamma*Z\dot{W})_t(x) := \int_{[0,t]\times\R} \Gamma_{t-s}(x-y)
	Z_s(y)\,W(\d s\,\d y).
\end{equation}
The preceding is defined as a stochastic integral with respect to 
the martingale measure $Z\dot{W}$
in the sense of Walsh \cite[Theorem 2.5]{walsh}, and is well defined
in the sense of Walsh \cite[Chapter 2]{walsh} provided that
the following condition holds for all $t>0$ and $x\in\R$:
\begin{equation}\label{eq:Ito:Isometry}
	\left\| (\Gamma*Z\dot{W})_t(x)\right\|_2^2 = \int_0^t\d s\int_{-\infty}^\infty
	\d y\ \left[ \Gamma_{t-s}(x-y)\right]^2 \|Z_s(y)\|_2^2<\infty.
\end{equation}

Let $\bm{W}^2$ denote the collection of all predictable random fields
$Z$ that satisfy the following: $Z_t(x)\in L^2(\P)$ and
\begin{equation}\label{eq:Ito:Isometry2}
	\int_0^t\d s\int_{-\infty}^\infty
	\d y\ \left[ \Gamma_{\tau-s}(x-y)\right]^2 \|Z_s(y)\|_2^2<\infty,
\end{equation}
for all $0<t\le\tau$ and $x\in\R$.

We may think of the elements of $\bm{W}^2$
as \emph{Walsh-integrable random fields}. And because
\eqref{eq:Ito:Isometry2} implies \eqref{eq:Ito:Isometry},
the preceding discussion
tells us that the stochastic convolution $\Gamma*Z\dot{W}$ is
a well-defined predictable random field for every $Z\in\bm{W}^2$.

Our present goal is to extend the definition of the stochastic convolution
of $Z$ so that it includes more general random processes $Z$. 
Other extensions of this stochastic convolutions have 
been developed for other purposes as well
\cite{Dalang,DalangMueller,NualartQuer,ConusDalang}.

Let us choose and fix a real number $k\in[2\,,\infty)$, and define
$\bm{L}^k$ to be the collection of all predictable
random fields $\{Z_t(x)\}_{t> 0,x\in\R}$ such that
$Z_t(x)\in L^k(\P)$ for all $t> 0$ and $x\in\R$. 
Let $M(\R)$ be the space of $\sigma$-finite Borel measures on $\R$. For every $\beta>0$
and $\eta\in M(\R)$ we can define
a norm on $\bm{L}^k$ as follows: For every $Z\in\bm{L}^k$,
\begin{equation} \label{norm}
	\mathcal{N}^k_{\beta,\eta}(Z) := 
	\left(\int_{0}^{\infty} \e^{-\beta t}\,\d t\  
	\sup_{z \in \R } \int_{-\infty}^\infty \eta(\d x )\ 
	\left\| Z_t(x-z)\right\|_k^2\right)^{1/2}.
\end{equation}

Here and throughout, we use implicitly the following
observation: If $Z,Z'\in\bm{L}^k$ satisfy
$\mathcal{N}^k_{\beta,\eta}(Z-Z')=0$, then
$Z$ and $Z'$ are modifications of one another. There is
an obvious converse as well: If $Z$ and $Z'$ are modifications of one another, 
then $\mathcal{N}^k_{\beta,\eta}(Z-Z')=0$. We omit the elementary proof.

Our next proposition is a ``stochastic Young's inequality,''
and plays a key role in our extension of Walsh-type stochastic convolutions.
But first let us introduce some notation that will be used
here and throughout.

Throughout this paper, $z_k$ denotes the optimal constant
in Burkholder's $L^k(\P)$-inequality for continuous square-integrable martingales; 
its precise value involves zeroes of Hermite polynomials,
and has been computed by Davis \cite{Davis}. 
By the It\^o isometry, $z_2=1$. Carlen and Kree \cite[Appendix]{CK} have shown 
that $z_k \le 2\sqrt{k}$ for all $k \ge 2$, and moreover $z_k=(2+o(1))\sqrt{k}$
as $k\to\infty$.

\begin{proposition}[A stochastic Young's inequality]\label{pr:SY}
	For every $k\in[2\,,\infty)$,
	$Z\in \bm{W}^2\cap \bm{L}^k$, $\eta\in M(\R)$, and $\beta>0$, 
	\begin{equation}
		\mathcal{N}^k_{\beta,\eta}(\Gamma*Z\dot W)
		\le z_k\left(\int_0^\infty\e^{-\beta t} \|\Gamma_t\|_{L^2(\R)}^2\,\d t
		\right)^{1/2}\cdot
		\mathcal{N}^k_{\beta,\eta}(Z),
	\end{equation}
where $z_k$ is the optimal constant in Burkholder's inequality.
\end{proposition}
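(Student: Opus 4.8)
The plan is to estimate the $L^k(\P)$-norm of the stochastic convolution pointwise in the parameters $(t\,,x\,,z)$ by means of Burkholder's inequality, and then to insert these bounds into the definition \eqref{norm} of $\mathcal N^k_{\beta,\eta}$, where a change of variables together with the Laplace-transform identity $\e^{-\beta t}=\e^{-\beta(t-s)}\e^{-\beta s}$ will decouple the contributions of $\Gamma$ and of $Z$.

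First I would fix $t>0$ and $x,z\in\R$, and regard $r\mapsto M_r:=\int_{[0,r]\times\R}\Gamma_{t-s}(x-z-y)Z_s(y)\,W(\d s\,\d y)$, $r\in[0,t]$, as a continuous square-integrable martingale; this is legitimate because $Z\in\bm W^2$. Its quadratic variation is $\langle M\rangle_t=\int_0^t\d s\int_\R\d y\,[\Gamma_{t-s}(x-z-y)]^2 Z_s(y)^2$, so Burkholder's inequality gives $\|(\Gamma*Z\dot W)_t(x-z)\|_k=\|M_t\|_k\le z_k\|\langle M\rangle_t^{1/2}\|_k$. Since $k\ge2$, I can write $\|\langle M\rangle_t^{1/2}\|_k^2=\|\langle M\rangle_t\|_{k/2}$ and apply Minkowski's integral inequality in $L^{k/2}(\P)$, using $\|Z_s(y)^2\|_{k/2}=\|Z_s(y)\|_k^2$ and the fact that $\Gamma$ is deterministic, to reach
\begin{equation*}
	\|(\Gamma*Z\dot W)_t(x-z)\|_k^2\le z_k^2\int_0^t\d s\int_\R\d y\,[\Gamma_{t-s}(x-z-y)]^2\,\|Z_s(y)\|_k^2.
\end{equation*}

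Next I would substitute this estimate into $\mathcal N^k_{\beta,\eta}(\Gamma*Z\dot W)^2$. The crux is the inner spatial integral: for fixed $s$ and $z$, the substitution $a=x-z-y$ and Tonelli's theorem yield
\begin{equation*}
	\int_\R\eta(\d x)\int_\R\d y\,[\Gamma_{t-s}(x-z-y)]^2\|Z_s(y)\|_k^2=\int_\R\d a\,[\Gamma_{t-s}(a)]^2\int_\R\eta(\d x)\,\|Z_s(x-(z+a))\|_k^2.
\end{equation*}
Bounding the $x$-integral by $G_s:=\sup_{w}\int_\R\eta(\d x)\,\|Z_s(x-w)\|_k^2$ makes the result independent of $z$, so the outer supremum over $z$ in \eqref{norm} becomes harmless and $\|\Gamma_{t-s}\|_{L^2(\R)}^2$ factors off. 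What remains is $z_k^2\int_0^\infty\e^{-\beta t}\,\d t\int_0^t\d s\,\|\Gamma_{t-s}\|_{L^2(\R)}^2\,G_s$; splitting $\e^{-\beta t}=\e^{-\beta(t-s)}\e^{-\beta s}$, interchanging the order of integration, and substituting $r=t-s$ factors this into $z_k^2\big(\int_0^\infty\e^{-\beta r}\|\Gamma_r\|_{L^2(\R)}^2\,\d r\big)\big(\int_0^\infty\e^{-\beta s}G_s\,\d s\big)$. The second factor is precisely $\mathcal N^k_{\beta,\eta}(Z)^2$, and taking square roots completes the argument.

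I expect the only genuinely substantive step to be the change of variables in the spatial integral, which exploits the translation-supremum built into $\mathcal N^k_{\beta,\eta}$: it is exactly this feature that lets the $z$-supremum commute past the convolution with $\Gamma$ at no cost. The remaining ingredients are routine---Burkholder's inequality supplies the constant $z_k$, Minkowski's integral inequality moves the $L^{k/2}(\P)$-norm inside the deterministic integral, and the exponential weight turns the time-convolution into a product via Tonelli. The points demanding care are merely the hypothesis $Z\in\bm W^2$, which guarantees that $M$ is a bona fide square-integrable martingale so that Burkholder applies, and the nonnegativity of all integrands, which justifies the repeated use of Tonelli.
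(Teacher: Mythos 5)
Your proposal is correct and follows essentially the same route as the paper: the pointwise $L^k(\P)$ bound via Burkholder's inequality for the frozen-time martingale followed by Minkowski's integral inequality is precisely the paper's Lemma \ref{lem:stoch:Minkowski} (with $\tau=t$), and the subsequent change of variables, translation-supremum bound, and Laplace-transform factorization of the time convolution reproduce the paper's proof of Proposition \ref{pr:SY}. The only detail the paper records that you omit is the Borel measurability of $(t,x)\mapsto\|(\Gamma*Z\dot W)_t(x)\|_k$, which is needed to make sense of the integrals defining $\mathcal N^k_{\beta,\eta}$; this is a minor technical point.
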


\begin{remark}
	We emphasize that $\bm{W}^2\cap \bm{L}^2=\bm{W}^2$.
\qed\end{remark}
Before we prove Proposition \ref{pr:SY} let us first describe how it can be used
to extend stochastic convolutions. Proposition \ref{pr:SY} will be proved
after that extension is described.

Let $\bm{B}^k_{\beta,\eta}$ denote the completion of $\bm{W}^2\cap
\bm{L}^k$ under the norm $\mathcal{N}^k_{\beta,\eta}$.\footnote{The latter
is of course a norm on equivalence classes of modifications of random fields
and not on random fields themselves. But we abuse notation as it is standard.} 
It follows then that
$\bm{B}^k_{\beta,\eta}$ is a Banach space of predictable
processes [identified up to evanescence].

Proposition \ref{pr:SY} immediately implies that if
\begin{equation}\label{cond:Dalang}
	\Upsilon(\beta) :=
	\int_0^\infty \e^{-\beta t}\|\Gamma_t\|_{L^2(\R)}^2\,\d t<\infty,
\end{equation}
then $Z\mapsto \Gamma*Z\dot{W}$ has a unique extension to 
all $Z\in\bm{B}^k_{\beta,\eta}$, and the resulting extension---written
still as $Z\mapsto \Gamma*Z\dot{W}$---defines a bounded linear operator from
$\bm{B}^k_{\beta,\eta}$ into itself. And the operator norm is at most
the square root of the \emph{Dalang integral} $\Upsilon(\beta)$.
[In the case that $\Gamma_t(x)$ denotes the transition density of a
L\'evy process with L\'evy exponent $\Psi$, Plancherel's theorem
implies that $\Upsilon(\beta)$ is the same Dalang integral as
in \eqref{Dalang}; see \eqref{eq:L2:Upsilon} below as well.]

From now on, we deal solely with this extension of the stochastic convolution.
However, we point out the there is a great deal of variability in this extension,
as the parameters $\beta>0$, $k\in[2\,,\infty)$, and $\eta\in M(\R)$
can take on many different values.

Let us conclude this section by establishing our stochastic Young's inequality.

The proof of Proposition \ref{pr:SY} relies on an elementary estimate for Walsh-type
stochastic integrals.

\begin{lemma}\label{lem:stoch:Minkowski}
	For all real numbers $t>0$, $x\in\R$, and
	$k\in[2\,,\infty)$, and for every $Z\in\bm{W}^2\cap \bm{L}^k$,
	\begin{equation}
		\left\| (\Gamma*Z\dot W)_t(x)\right\|_k^k
		\le z_k^k\left(\int_0^t\d s\int_{-\infty}^\infty\d y\
		[\Gamma_{t-s}(x-y)]^2\| Z_s(y)\|_k^2\right)^{k/2}.
	\end{equation}
\end{lemma}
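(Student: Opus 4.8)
The plan is to view the stochastic convolution, evaluated at the fixed point $(t\,,x)$, as the terminal value of a continuous $L^2(\P)$-martingale indexed by its upper time-limit, and then to combine Burkholder's inequality (with the sharp constant $z_k$) with Minkowski's integral inequality. Concretely, I would fix $t>0$, $x\in\R$, and $k\in[2\,,\infty)$, and for $r\in[0\,,t]$ set
\begin{equation*}
	M_r := \int_{[0,r]\times\R}\Gamma_{t-s}(x-y)Z_s(y)\,W(\d s\,\d y).
\end{equation*}
Since $Z\in\bm{W}^2$, the integrand $(s\,,y)\mapsto\Gamma_{t-s}(x-y)Z_s(y)$ is Walsh-integrable, so $\{M_r\}_{r\in[0,t]}$ is a well-defined continuous square-integrable martingale whose quadratic variation is, by the It\^o isometry \eqref{eq:Ito:Isometry},
\begin{equation*}
	\langle M\rangle_r = \int_0^r\d s\int_{-\infty}^\infty\d y\ [\Gamma_{t-s}(x-y)]^2\,[Z_s(y)]^2,
\end{equation*}
and $M_t=(\Gamma*Z\dot W)_t(x)$. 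If the right-hand side of the asserted inequality is $+\infty$ there is nothing to prove, so I may assume it is finite.

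First I would apply Burkholder's inequality to this martingale, using Davis's optimal constant $z_k$, to obtain
\begin{equation*}
	\|M_t\|_k^k\le\E\!\left[\sup_{r\in[0,t]}|M_r|^k\right]\le z_k^k\,\E\!\left[\langle M\rangle_t^{k/2}\right]
	= z_k^k\,\bigl\|\langle M\rangle_t\bigr\|_{k/2}^{k/2}.
\end{equation*}
Next I would bound $\|\langle M\rangle_t\|_{k/2}$ by pulling the $L^{k/2}(\P)$-norm inside the deterministic integral over $(s\,,y)$. Because $k\ge 2$, the exponent $k/2$ is at least $1$, so $\|\cdot\|_{k/2}$ is a genuine norm and Minkowski's integral inequality applies:
\begin{equation*}
	\bigl\|\langle M\rangle_t\bigr\|_{k/2}
	\le\int_0^t\d s\int_{-\infty}^\infty\d y\ [\Gamma_{t-s}(x-y)]^2\,\bigl\|[Z_s(y)]^2\bigr\|_{k/2}.
\end{equation*}
Finally, the elementary identity $\|[Z_s(y)]^2\|_{k/2}=\|Z_s(y)\|_k^2$ rewrites this as the stated deterministic double integral; raising both sides to the power $k/2$ and substituting into the Burkholder estimate yields the claim.

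The only genuine subtlety—the step I would treat most carefully—is the justification of the Burkholder bound with the sharp constant $z_k$: one must confirm that $r\mapsto M_r$ is indeed a \emph{continuous} square-integrable martingale, so that the continuous-martingale form of Burkholder's inequality (the one defining $z_k$) is the applicable one, and that $\langle M\rangle_t\in L^{k/2}(\P)$, which is guaranteed once the right-hand integral is finite together with $Z\in\bm{L}^k$. Both the Minkowski step (whose validity hinges only on $k/2\ge 1$, i.e.\ $k\ge 2$) and the norm identity are routine, so once the martingale structure is in place the proof is essentially immediate.
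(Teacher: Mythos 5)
Your proof is correct and follows essentially the same route as the paper: view the integral up to time $r$ with the kernel frozen at $\Gamma_{t-\cdot}$ as a continuous $L^2(\P)$-martingale (the paper's $(\Gamma*Z\dot W)_{r,t}(x)$), apply Davis's sharp form of Burkholder's inequality, and then Minkowski's integral inequality for $\|\cdot\|_{k/2}$. The only cosmetic caveat is that you should apply Davis's bound directly to the terminal value $M_t$ rather than routing it through $\E[\sup_r|M_r|^k]$, since the optimal constant $z_k$ is the one for $\|M_t\|_k\le z_k\,\|\langle M\rangle_t^{1/2}\|_k$ and need not control the maximal function.
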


\begin{proof}
	Condition \eqref{eq:Ito:Isometry2} implies that if $0<t\le\tau$, then
	\begin{equation}
		(\Gamma*Z\dot{W})_{t,\tau}(x):=\int_{[0,t]\times\R}
		\Gamma_{\tau-s}(x-y)Z_s(y)\, W(\d s\,\d y)
	\end{equation}
	is well defined and in $L^2(\P)$. Moreover,
	\begin{equation}
		\left\| (\Gamma*Z\dot{W})_{t,\tau}(x)\right\|_2=
		\left(\int_0^t\d s\int_{-\infty}^\infty \d y\
		\left[ \Gamma_{\tau-s}(x-y)\right]^2 \| Z_s(y)\|_2^2\right)^{1/2}.
	\end{equation}
	Walsh's theory of martingale measures \cite[Theorem 2.5]{walsh} tells us that
	the stochastic process $(0\,,\tau]\ni t\mapsto (\Gamma*Z\dot{W})_{t,\tau}(x)$
	is a continuous $L^2(\P)$-martingale.
	Therefore,  Davis's refinement \cite{Davis} of Burkholder's inequality 
	\cite{Burkholder,BDG,Millar} implies that
	\begin{equation}
		\left\| (\Gamma*Z\dot W)_{t,\tau}(x)\right\|_k^k
		\le z_k^k \left\|\int_0^t\d s\int_{-\infty}^\infty\d y\,
		\left[\Gamma_{\tau-s}(x-y)\right]^2
		\left[ Z_s(y)\right]^2 \right\|_{k/2}^{k/2}.
	\end{equation}
	And it follows from Minkowski's inequality that
	\begin{equation}\label{eq:BI}
		\left\| (\Gamma*Z\dot W)_{t,\tau}(x)\right\|_k^k
		\le z_k^k \int_0^t\d s\int_{-\infty}^\infty\d y\,
		\left[\Gamma_{\tau-s}(x-y)\right]^2
		\left\| Z_s(y) \right\|_k^2.
	\end{equation}
	The lemma follows from this upon setting $\tau:=t$.
\end{proof}

\begin{proof}[Proof of Proposition \ref{pr:SY}]
	The original construction of Walsh implies that
	$\|(\Gamma*Z\dot{W})_t(x)\|_k$ defines a Borel-measurable function of
	$(t\,,x)\in(0\,,\infty)\times\R$. Indeed, it suffices to verify this
	measurability assertion in the case that $Z$ is a simple function in the sense
	of Walsh \cite[p.\ 292]{walsh}, in which case the said measurability follows from a
	direct computation. 
	
	We may apply Lemma \ref{lem:stoch:Minkowski}
	with $x-z$ in place of the variable $x$, and then integrate $[\d\eta]$ to obtain
	\begin{equation}\begin{split}
		&\int_{-\infty}^\infty\eta(\d x)\ \left\| (\Gamma*Z\dot W)_t(x-z)\right\|_k^2\\
		&\le z_k^2\int_{-\infty}^\infty\eta(\d x)\int_0^t\d s
			\int_{-\infty}^\infty\d y\ [\Gamma_{t-s}(x-z-y)]^2\|Z_s(y)\|_k^2\\
		&= z_k^2\int_{-\infty}^\infty\eta(\d x)\int_0^t\d s
			\int_{-\infty}^\infty\d y\ [\Gamma_{t-s}(y)]^2\|Z_s(x-z-y)\|_k^2\\
		&\le z_k^2\int_0^t\d s\ \|\Gamma_{t-s}\|_{L^2(\R)}^2
			\sup_{v\in\R}\int_{-\infty}^\infty\eta(\d x)\ \|Z_s(x-v)\|_k^2.
	\end{split}\end{equation}
	Or equivalently,
	\begin{equation}\begin{split}
		&\sup_{z\in\R}\int_{-\infty}^\infty\eta(\d x)\ 
			\left\| (\Gamma*Z\dot W)_t(x-z)\right\|_k^2\\
		&\hskip.9in\le z_k^2\int_0^t\d s\ \|\Gamma_{t-s}\|_{L^2(\R)}^2
			\sup_{z\in\R}\int_{-\infty}^\infty\eta(\d x)\ \|Z_s(x-z)\|_k^2.
	\end{split}\end{equation}
	Multiply both sides by $\exp(-\beta t)$, integrate $[\d t]$ and use Laplace transforms properties for convolutions to obtain the result.
\end{proof}

\begin{proposition}\label{pr:SI:Lip}
	Suppose $\sigma:\R\to\R$ is Lipschitz continuous and $Z,Z^*\in\bm{B}^k_{\beta,\eta}$
	for some $k\in[2\,,\infty)$, $\beta>0$, and %a sigma-finite Borel measure 
	$\eta \in M(\R)$.
	Then,
	\begin{equation}
		\mathcal{N}^k_{\beta,\eta}(\sigma\circ Z - \sigma\circ Z^*) \le
		\text{\rm Lip}_\sigma\cdot\mathcal{N}^k_{\beta,\eta}(Z-Z^*).
	\end{equation}
\end{proposition}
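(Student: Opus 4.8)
The plan is to reduce the entire estimate to the pointwise Lipschitz bound on $\sigma$ and then to propagate that bound, essentially unchanged, through each successive layer of the norm $\mathcal{N}^k_{\beta,\eta}$ from \eqref{norm}. The key structural observation is that the constant ${\rm Lip}_\sigma$ factors cleanly through every operation involved in \eqref{norm}---taking $L^k(\P)$-moments, integrating against $\eta$, taking a supremum over translates, and computing the Laplace transform in $t$---because all of these are monotone and positively homogeneous. Thus no cancellation or cross terms can arise, and the inequality is obtained by a single monotone chain.

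First I would fix $t>0$ and $x,z\in\R$ and argue $\omega$ by $\omega$. Since $\sigma$ is Lipschitz, $|\sigma(Z_t(x-z))-\sigma(Z^*_t(x-z))|\le{\rm Lip}_\sigma\,|Z_t(x-z)-Z^*_t(x-z)|$ almost surely. Raising to the power $k$ and taking expectations gives $\|\sigma(Z_t(x-z))-\sigma(Z^*_t(x-z))\|_k\le{\rm Lip}_\sigma\,\|Z_t(x-z)-Z^*_t(x-z)\|_k$, and after squaring, the same bound holds with ${\rm Lip}_\sigma^2$ and $\|\cdot\|_k^2$ in place of ${\rm Lip}_\sigma$ and $\|\cdot\|_k$. (Here one also uses $|\sigma(u)|\le|\sigma(0)|+{\rm Lip}_\sigma|u|$ to confirm that $\sigma\circ Z\in\bm{L}^k$, so that every quantity in sight is finite.)

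Next I would integrate this pointwise bound against $\eta(\d x)$, which preserves the inequality and pulls the constant ${\rm Lip}_\sigma^2$ outside the integral; then take the supremum over $z\in\R$ of both sides, using that the supremum of a dominated quantity is dominated by the supremum of the dominating quantity; then multiply through by $\e^{-\beta t}$ and integrate $\d t$ over $(0\,,\infty)$. Taking square roots then yields precisely $\mathcal{N}^k_{\beta,\eta}(\sigma\circ Z-\sigma\circ Z^*)\le{\rm Lip}_\sigma\cdot\mathcal{N}^k_{\beta,\eta}(Z-Z^*)$.

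The only genuinely non-routine point---and the step I would handle with the most care---is the meaning of $\sigma\circ Z$ when $Z$ lies in the abstract completion $\bm{B}^k_{\beta,\eta}$ rather than in $\bm{W}^2\cap\bm{L}^k$, where composition with $\sigma$ is literal. I would therefore first establish the displayed inequality for $Z,Z^*\in\bm{W}^2\cap\bm{L}^k$, where the argument above applies verbatim. This shows that $Z\mapsto\sigma\circ Z$ is Lipschitz, hence uniformly continuous, on a dense subset of $\bm{B}^k_{\beta,\eta}$, so it extends uniquely and continuously to all of $\bm{B}^k_{\beta,\eta}$; the Lipschitz constant ${\rm Lip}_\sigma$ is preserved under this passage to the limit. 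Apart from this extension remark, the proof is an entirely routine chain of monotone estimates with no analytic obstacle.
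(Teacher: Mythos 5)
Your argument on the dense subspace $\bm{W}^2\cap\bm{L}^k$ is exactly the paper's: the pointwise bound $|\sigma(Z_t(x))-\sigma(Z^*_t(x))|\le{\rm Lip}_\sigma\,|Z_t(x)-Z^*_t(x)|$ passes monotonically through $\|\cdot\|_k$, the $\eta$-integral, the supremum over translates, and the Laplace transform in $t$; the paper dismisses this case with the single word ``immediate.'' The divergence is in the passage to general $Z,Z^*\in\bm{B}^k_{\beta,\eta}$, and there your resolution leaves a genuine gap. You \emph{define} $\sigma\circ Z$ on the completion as the unique continuous extension of the Lipschitz map $Z\mapsto\sigma\circ Z$ from the dense subspace. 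That makes the displayed inequality true essentially by construction, but it is not the statement the paper needs: $\bm{B}^k_{\beta,\eta}$ is realized as a space of actual predictable processes, and $\sigma\circ Z$ is meant to be the literal pointwise composition $\sigma(Z_t(x))$ --- this is how it is used later, for instance when the weak solution is shown to satisfy \eqref{mild} for almost every $(t,x)$ with probability one. One must therefore verify that your abstract extension coincides with the pointwise composition, and that verification is precisely the content of the paper's proof.

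Concretely, the paper chooses approximating sequences $Z^n\to Z$ and $Z^{n,*}\to Z^*$ in $\bm{B}^k_{\beta,\eta}$ with $\mathcal{N}^k_{\beta,\eta}(Z^n-Z^{n+1})+\mathcal{N}^k_{\beta,\eta}(Z^{n,*}-Z^{n+1,*})\le 2^{-n}$, so that the summability in \eqref{eq:prec} forces $Z^n_t(x)\to Z_t(x)$ almost surely for almost every $(t,x)$, hence $\sigma(Z^n_t(x))\to\sigma(Z_t(x))$ by continuity of $\sigma$; combined with the fact (which you do establish) that $\sigma\circ Z^n$ is Cauchy in $\bm{B}^k_{\beta,\eta}$, this identifies the limit of $\sigma\circ Z^n$ with the literal composition $\sigma\circ Z$. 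Without this subsequence extraction and a.e.--a.s.\ convergence step, your proof establishes the Lipschitz bound only for a nonlinear operator that has not been shown to be the one appearing in the statement. Everything else in your write-up is correct and matches the paper.
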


\begin{proof}
	If $Z,Z^*\in\bm{W}^2\cap \bm{L}^k$, then this is immediate.
	In the general case we proceed by approximation:
	Let $Z^1,Z^2,\dots,Z^{1,*},Z^{2,*},\ldots$ be in $\bm{W}^2\cap\bm{L}^k$
	such that $Z^n\to Z$ and $Z^{n,*}\to Z^*$ in $\bm{B}^k_{\beta,\eta}$,
	as $n\to\infty$. By going to a subsequence, if necessary, we can
	[and will!] also assume that
	\begin{equation}
		\mathcal{N}^k_{\beta,\eta}(Z^n-Z^{n+1}) +
		\mathcal{N}^k_{\beta,\eta}(Z^{n,*}-Z^{n+1,*})\le 2^{-n}
		\quad\text{for all $n\ge 1$}.
	\end{equation}
	It follows also that for all $n\ge 1$,
	\begin{equation}\label{eq:est}
		\mathcal{N}^k_{\beta,\eta}(\sigma\circ Z^n-\sigma\circ Z^{n+1}) +
		\mathcal{N}^k_{\beta,\eta}(\sigma\circ Z^{n,*}-\sigma\circ Z^{n+1,*})\le 
		\text{Lip}_\sigma\cdot 2^{-n}.
	\end{equation}
	Of course, this implies immediately that $\sigma\circ Z^n$
	and $\sigma\circ Z^{n,*}$ converge in $\bm{B}^k_{\beta,\eta}$.
	It suffices to prove that the mentioned limits are respectively
	$\sigma\circ Z$ and $\sigma\circ Z^*$. But \eqref{eq:est} implies
	that 
	\begin{equation}\label{eq:prec}
		\int_0^\infty\e^{-\beta t}\,\d t
		\sum_{n=1}^\infty \sup_{z\in\R}\int_{-\infty}^\infty
		\eta(\d x)\left\| \Delta^n_t(x-z)\right\|_k^2
		<\infty,
	\end{equation}
	where $\Delta^n_t(x)$ stands for any one of the following four quantities:
	\begin{itemize}
		\item $Z^n_t(x)-Z^{n+1}_t(x)$; 
		\item $Z^{n,*}_t(x)-Z^{n+1,*}_t(x)$;
		\item $\sigma(Z^n_t(x))-\sigma(Z^{n+1}_t(x))$; or
		\item $\sigma(Z^{n,*}_t(x))-\sigma(Z^{n+1,*}_t(x))$. 
	\end{itemize}
	Because 
	$\sum_{n=1}\sup_{z\in\R}(\,\cdots)\le\sup_{z\in\R}\sum_{n=1}^\infty(\,\cdots)$
	in \eqref{eq:prec}, it follows readily that for almost every pair $(t\,,x)\in\R_+\times\R$:
	\begin{itemize}
		\item $\lim_{n\to\infty}Z^n_t(x)=Z_t(x)$ almost surely;
		\item $\lim_{n\to\infty}Z^{n,*}_t(x)=Z^*_t(x)$ almost surely;
		\item $\lim_{n\to\infty}\sigma(Z^n_t(x))=\sigma(Z_t(x))$ almost surely; and
		\item $\lim_{n\to\infty}\sigma(Z^{n,*}_t(x))=\sigma(Z^*_t(x))$ almost surely.
	\end{itemize}
	[Note the order of the quantifiers!] We showed earlier that
	$\lim_{n\to\infty}\sigma\circ Z^n$ and $\lim_{n\to\infty}\sigma\circ Z^{n,*}$
	exist in $\bm{B}^k_{\beta,\eta}$. The preceding shows that those
	limits are respectively $\sigma\circ Z$ and $\sigma\circ Z^*$.
	This completes the proof.
\end{proof}

\section{Existence and Uniqueness} \label{sec:eu}

This section is devoted to the statement and proof of the existence and 
uniqueness of a weak solution to \eqref{heat}. We will make 
use of the generalized stochastic convolution developed in 
Section \ref{sec:gen_conv}. 

Before we proceed further, let us observe that from now on
$\Gamma_t(x)$ of the previous section is
chosen to be equal to the modified transition functions
$\tilde{p}_t(x)$, in which case Dalang's integral can be computed
from Plancherel's formula as follows:
\begin{equation}\label{eq:L2:Upsilon}
	\Upsilon(\beta)=\int_0^\infty \e^{-\beta t}\|p_t\|_{L^2(\R)}^2\,\d t
	=\frac{1}{2\pi}\int_{-\infty}^\infty \frac{\d\xi}{\beta+2\Re\Psi(\xi)}.
\end{equation}
In particular, the $\Upsilon$ of
\eqref{cond:Dalang} and that of \eqref{Dalang} are equal
in the present setting.

Next we identify our notion of ``solution'' to \eqref{heat}
in the case that $u_0=\mu$ is a measure.

Suppose first that $u_0$ is a nice \emph{function}
and \eqref{heat} has a mild solution $u$ with initial datum $u_0$.
Then for all $t>0$ and $x\in\R$,
\begin{equation}
	\P\left\{
	u_t(x) = (P_tu_0)(x) +\left( \tilde{p}*(\sigma\circ u)\dot W \right)_t(x)
	\right\}=1.
\end{equation}
Consequently, Fubini's theorem tells us that every mild 
solution $u$ to \eqref{heat} with initial function $\mu:=u_0$ is a 
\emph{weak solution} in the sense that the following holds with probability one
[note the order of the quantifiers!]:
\begin{equation} \label{mild}
	u_t(x) = (P_t\mu)(x) +\left( \tilde{p}*(\sigma\circ u)\dot W \right)_t(x)
	\quad\text{for a.e.\ $(t\,,x)\in\R_+\times\R.$}
\end{equation}
It is easy to see that the preceding agrees with Walsh's definition 
of a weak solution \cite[p.\ 309]{walsh}.

Now we consider \eqref{heat} in the case that $u_0=\mu$ is a 
possibly-signed Borel measure on $\R$.

Let us suppose that Dalang's condition \eqref{cond:Dalang} holds, and consider
an arbitrary $u\in\bm{B}^k_{\beta,\eta}$. Since $\sigma$ is
Lipschitz continuous, it follows that
$\sigma\circ u\in \bm{B}^k_{\beta,\eta}$. Therefore,
we can conclude that the stochastic convolution 
$\tilde{p}*(\sigma\circ u)\dot{W}$ is a well-defined mathematical object,
as was shown in the previous section. Consequently, 
we can try to find a solution $u$ to \eqref{heat} with
$u_0=\mu$ by 
seeking to find $u\in\bm{B}^k_{\beta,\eta}$ such that
\begin{equation}\label{mild2}
	u = P_\bullet \mu + \tilde{p}*(\sigma\circ u)\dot W,
\end{equation}
where the equality is understood as equality of elements of
$\bm{B}^k_{\beta,\eta}$. Of course, we implicitly are
assuming that $P_\bullet \mu\in\bm{B}^k_{\beta,\eta}$ as well.
That condition is clearly satisfied if
\begin{equation}\label{cond:u0}
	\int_0^\infty \e^{-\beta s}\,\d s\,
	\sup_{z\in\R}\int_{-\infty}^\infty\eta(\d x)\left|
	(P_s\mu)(x-z)\right|^2<\infty.
\end{equation}
Then, $u$ is a solution of function-space type
to \eqref{heat} with $u_0=\mu$. But it has more structure than that.
Indeed, suppose that:
(i) \eqref{cond:u0} holds; and (ii)
There exists $u\in\bm{B}^k_{\beta,\eta}$ that satisfies
\eqref{mild2}. Then the preceding discussion shows
also that $u$ is a weak solution to \eqref{heat}
in the sense of Walsh \cite[p.\ 309]{walsh}. And
it would be hopeless to try to prove that such
a $u$ is a mild solution, as there is no natural way to
define $u_t(x)$ for all $t>0$ and $x\in\R$.

Throughout the remainder of this section
we choose $\eta \in M(\R)$. In the case that $\sigma(0)\neq 0$, then we
assume additionally that $\eta$ is a finite measure.

\begin{theorem}\label{th:1}
	Consider \eqref{heat}
	subject to $u_0=\mu$, where $\mu$ is a signed
	measure that satisfies \eqref{cond:u0}. 
	If \eqref{Dalang} holds
	and
	\begin{equation}\label{cond:beta:large}
		\Upsilon(\beta)<\frac{1}{(z_k\text{\rm Lip}_\sigma)^2},
	\end{equation}
	then there exists
	a solution $u\in \bm{B}^k_{\beta,\eta}$
	that satisfies \eqref{mild2}. Moreover, $u$ is unique
	in $\bm{B}^k_{\beta,\eta}$; i.e., if there exists another
	weak solution $v$ that is in $\bm{B}^k_{\beta,\eta}$ for
	some $k\ge 2$, then $v$ is a modification of $u$.
\end{theorem}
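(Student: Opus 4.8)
Define the map $T$ on $\bm{B}^k_{\beta,\eta}$ by
\[
	T(u) := P_\bullet \mu + \tilde{p}*(\sigma\circ u)\dot W.
\]
First I would check that $T$ maps $\bm{B}^k_{\beta,\eta}$ into itself. The term $P_\bullet\mu$ lies in $\bm{B}^k_{\beta,\eta}$ by hypothesis \eqref{cond:u0}. For the convolution term, Proposition \ref{pr:SI:Lip} shows that $\sigma\circ u \in \bm{B}^k_{\beta,\eta}$ whenever $u$ is (using that $\sigma$ is Lipschitz, together with the finiteness of $\eta$ when $\sigma(0)\neq 0$ to control the constant $\sigma(0)$); then the stochastic Young inequality of Proposition \ref{pr:SY}, extended to all of $\bm{B}^k_{\beta,\eta}$ as described after its statement, guarantees $\tilde{p}*(\sigma\circ u)\dot W \in \bm{B}^k_{\beta,\eta}$, since the operator norm of $Z\mapsto\tilde p * Z\dot W$ is bounded by $\sqrt{\Upsilon(\beta)}<\infty$ under \eqref{Dalang}.

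**Next I would establish that $T$ is a strict contraction.** For $u,v\in\bm{B}^k_{\beta,\eta}$, the deterministic terms $P_\bullet\mu$ cancel, so by linearity
\[
	\mathcal{N}^k_{\beta,\eta}(T(u)-T(v)) = \mathcal{N}^k_{\beta,\eta}\bigl(\tilde p * (\sigma\circ u - \sigma\circ v)\dot W\bigr).
\]
Applying Proposition \ref{pr:SY} with $Z=\sigma\circ u - \sigma\circ v$ and then Proposition \ref{pr:SI:Lip} gives
\[
	\mathcal{N}^k_{\beta,\eta}(T(u)-T(v)) \le z_k\sqrt{\Upsilon(\beta)}\;\mathcal{N}^k_{\beta,\eta}(\sigma\circ u-\sigma\circ v)
	\le z_k\sqrt{\Upsilon(\beta)}\,\text{\rm Lip}_\sigma\;\mathcal{N}^k_{\beta,\eta}(u-v).
\]
The contraction constant is $z_k\,\text{\rm Lip}_\sigma\,\sqrt{\Upsilon(\beta)}$, which is strictly less than $1$ precisely by the hypothesis \eqref{cond:beta:large}, namely $\Upsilon(\beta)<(z_k\text{\rm Lip}_\sigma)^{-2}$. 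The Banach fixed-point theorem then yields a unique fixed point $u\in\bm{B}^k_{\beta,\eta}$ solving \eqref{mild2}, which by the discussion preceding the theorem is a weak solution in Walsh's sense.

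**The main subtlety — rather than a single hard obstacle — is the uniqueness statement, which asserts more than the fixed-point theorem gives directly:** it claims uniqueness among \emph{all} weak solutions lying in $\bm{B}^{k'}_{\beta,\eta}$ for \emph{some} $k'\ge 2$, not merely among solutions for the fixed $k$ used to run the contraction. To handle this I would argue that any two solutions $u,v$ in such spaces can be compared in a common space: since the norms are nested appropriately and a weak solution satisfying \eqref{mild} automatically satisfies \eqref{mild2} as an identity in $\bm{B}^k_{\beta,\eta}$, the contraction estimate above applies to their difference and forces $\mathcal{N}^k_{\beta,\eta}(u-v)=0$, whence $u$ and $v$ are modifications of one another (by the observation recorded after \eqref{norm}). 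The one point requiring care is confirming that a candidate solution $v$ in $\bm{B}^{k'}_{\beta,\eta}$ genuinely sits in the space where the contraction operates, and that the fixed parameter $\beta$ chosen to satisfy \eqref{cond:beta:large} does not conflict with the parameter for which $v$ was assumed to exist; I would resolve this by noting that \eqref{cond:beta:large} holds for all sufficiently large $\beta$ (since $\Upsilon(\beta)\downarrow 0$ as $\beta\to\infty$), so the argument can be carried out at a common, large enough $\beta$.
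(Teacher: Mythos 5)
Your proposal is correct and follows essentially the same route as the paper: the paper runs the Picard iteration $u^{(n+1)} = P_\bullet\mu + \tilde{p}*([\sigma\circ u^{(n)}]\dot W)$ explicitly and shows it is Cauchy via the same contraction constant $z_k\,\mathrm{Lip}_\sigma\sqrt{\Upsilon(\beta)}<1$ coming from Propositions \ref{pr:SY} and \ref{pr:SI:Lip}, which is just the Banach fixed-point theorem unwound, and it handles the $\sigma(0)\neq 0$ case with the same bound $\mathcal{N}^k_{\beta,\eta}(\1)=\sqrt{\eta(\R)/\beta}<\infty$. The uniqueness argument (apply the contraction estimate to $\delta=u-v$ and conclude $\mathcal{N}^k_{\beta,\eta}(\delta)=0$) is also identical to the paper's.
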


\begin{proof}
	First, we argue that we can always choose $\beta$ such that
	\eqref{cond:beta:large} holds.
	
	Indeed, Condition \eqref{cond:u0} implies
	that $\mathcal{N}^k_{\beta,\eta}(P_\bullet u_0)<\infty$
	for all $\beta> 0$ and $k\in[2\,,\infty)$. 
	Also, because of Dalang's condition \eqref{Dalang}, and
	by the monotone convergence theorem,
	$\lim_{\alpha\to\infty}\Upsilon(\alpha)=0$. Therefore, we can combine these
	two observations to deduce that \eqref{cond:beta:large} holds for all $\beta$ large,
	where $1/0:=\infty$. Throughout the remainder of the proof, we hold fixed a
	$\beta$ that satisfies \eqref{cond:beta:large}.
	
	Set $u^{(0)}_t := 0$, and iteratively define
	\begin{equation}
		u^{(n+1)} := P_\bullet\mu + \tilde{p}*\left(
		[\sigma\circ u^{(n)}]\dot W\right).
	\end{equation}
	These $u^{(n+1)}$'s are all well defined elements of 
	$\bm{B}^k_{\beta,\eta}$. In fact,
	it follows from Proposition \ref{pr:SY} that for all $n\ge 0$,
	\begin{equation}\begin{split}
		\mathcal{N}^k_{\beta,\eta}\left(u^{(n+1)}\right)
			&\le \mathcal{N}^k_{\beta,\eta}\left(P_\bullet\mu\right)+
			z_k \sqrt{\Upsilon(\beta)}\,\mathcal{N}^k_{\beta,\eta}
			\left(\sigma\circ u^{(n)}\right).
	\end{split}\end{equation}
	And because $|\sigma(z)|\le |\sigma(0)| + \text{Lip}_\sigma|z|$ for all
	$z\in\R$,
	\begin{equation}\begin{split}
		&\mathcal{N}^k_{\beta,\eta}\left(u^{(n+1)}\right)\\
		&\hskip.1in\le \mathcal{N}^k_{\beta,\eta}\left(P_\bullet\mu\right)+
			z_k \sqrt{\Upsilon(\beta)}\left[|\sigma(0)|\cdot \mathcal{N}^k_{\beta,\eta}
			(\1) +\text{Lip}_\sigma\cdot
			\mathcal{N}^k_{\beta,\eta}\left(u^{(n)}\right)\right],
	\end{split}\end{equation}
	where $\1_t(x):=1$ for all $t>0$ and $x\in\R$. 
	In particular, $u^{(l)}\in \bm{B}^k_{\beta,\eta}$ for all $l\ge 0$, by induction.
	This is clear if $\sigma(0)=0$; and if $\sigma(0)\neq 0$, then it is also
	true because $\mathcal{N}^k_{\beta,\eta}(\1) =\sqrt{\eta(\R)/\beta}<\infty$,
	thanks to the finiteness assumption on $\eta$ [for the case $\sigma(0)\neq 0$].
	Moreover, \eqref{cond:beta:large} and induction together show more; namely, that
	$\sup_{n\ge 0}\mathcal{N}^k_{\beta,\eta}(u^{(n)})<\infty$.
	
	A similar computation, this time using also Proposition \ref{pr:SI:Lip}, shows that for all $n\ge 1$,
	\begin{equation}
		\mathcal{N}^k_{\beta,\eta}\left(u^{(n+1)}-u^{(n)}\right)
		\le z_k\text{Lip}_\sigma\sqrt{\Upsilon(\beta)}\cdot
		\mathcal{N}^k_{\beta,\eta}\left(u^{(n)}-u^{(n-1)}\right).
	\end{equation}
	And \eqref{cond:beta:large} implies that 
	$\sum_{n=0}^\infty \mathcal{N}^k_{\beta,\eta} (u^{(n+1)}-u^{(n)} )
	<\infty$,
	therefore, $\{u^{(n)}\}_{n=0}^\infty$ is a Cauchy sequence in $\bm{B}^k_{\beta,\eta}$.
	Let $u:=\lim_{n\to\infty}u^{(n)}$, where the limit takes place in $\bm{B}^k_{\beta,\eta}$.
	According to Proposition \ref{pr:SY}, 
	\begin{equation}\begin{split}
		\mathcal{N}^k_{\beta,\eta}\left(
			\tilde{p}*u^{(n)}\dot W - \tilde{p}* u\dot W\right) &\le
			z_k\sqrt{\Upsilon(\beta)}\cdot \mathcal{N}^k_{\beta,\eta}\left(
			u^{(n)}-u\right)\\
		&\to 0\qquad\text{as $n\to\infty$}.
	\end{split}\end{equation}
	It follows readily from these remarks that
	$\mathcal{N}^k_{\beta,\eta} (
	u - P_\bullet \mu+ \tilde{p}*(\sigma\circ u)\dot{W} )=0$.
	That is, $u$ satisfies \eqref{mild} for almost all $(t\,,x)\in\R_+\times\R$;
	see also \eqref{mild2}. This proves part (1) of the theorem.
	
	In order to prove the second part, let us suppose that there exists
	another ``weak solution'' $v\in\bm{B}^k_{\beta,\eta}$. Then,
	$\delta:=u-v\in \bm{B}^k_{\beta,\eta}$ and
	\begin{equation}
		\delta = \tilde{p}*\left([\sigma\circ u]\dot{W}\right)-
		\tilde{p}*\left([\sigma\circ v]\dot{W}\right)=
		\tilde{p}*\left([\sigma\circ u-\sigma \circ v]\dot{W}\right).
	\end{equation}
	[The second identity follows from the very construction of our
	stochastic convolution, using the fact that $Z\mapsto \tilde{p}* Z\dot{W}$
	is a bounded \emph{linear} map from $\bm{B}^k_{\beta,\eta}$
	to itself.]
	Propositions \ref{pr:SY} and \ref{pr:SI:Lip}
	together imply the following:
	\begin{equation}\begin{split}
		\mathcal{N}^k_{\beta,\eta}(\delta) &\le z_k\sqrt{\Upsilon(\beta)}\cdot
			\mathcal{N}^k_{\beta,\eta}\left(\sigma\circ u - \sigma\circ v\right)\\
		&\le z_k\text{Lip}_\sigma\sqrt{\Upsilon(\beta)}\cdot \mathcal{N}^k_{\beta,\eta}
			(\delta).
	\end{split}\end{equation}
	Thanks to \eqref{cond:beta:large}, $\mathcal{N}^k_{\beta,\eta}(u-v)=
	\mathcal{N}^k_{\beta,\eta}(\delta)=0$. This readily implies that
	$u$ and $v$ are modifications of one another, as well.
\end{proof}

\section{On Condition \eqref{cond:beta:large}} \label{sec:sharp}

Let us consider the measure $\eta_m \in M(\R)$ defined by
\begin{equation}
	\eta_m(\d x) = \e^{-|x|/m}\,\d x,
\end{equation}
where $m>0$ is fixed. If $\sigma(0)=0$, then we may take $m:=\infty$,
whence $\eta(\d x)=\d x$.

\begin{theorem}\label{th:2}
	Suppose \eqref{heat} has a solution $u\in\cap_{m>0}\bm{B}^2_{\beta,\eta_m}$
	with $u_0=\mu$ for a nonvoid signed Borel measure $\mu$ on $\R$ with
	$|\mu|(\R)<\infty$. Suppose
	also that ${\rm L}_\sigma:=\inf_{z\in\R}|\sigma(z)/z|>0$. Then, $\beta$ satisfies
	$\Upsilon(\beta) < {\rm L}_\sigma^{-2}$.
\end{theorem}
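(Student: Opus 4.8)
The plan is to exploit the one feature that distinguishes $k=2$: Burkholder's constant is $z_2=1$, so Proposition \ref{pr:SY} is in fact an exact It\^o isometry, and the one-sided estimates used for existence become a two-sided renewal \emph{identity} that can be bounded from below. Since $u\in\bm{B}^2_{\beta,\eta_m}$ solves \eqref{mild2}, for a.e.\ $(t\,,x)$ we have $u_t(x)=(P_t\mu)(x)+(\tilde p*(\sigma\circ u)\dot W)_t(x)$ almost surely. The first summand is deterministic and the second is a mean-zero Walsh integral, hence the two are orthogonal in $L^2(\P)$; combining this with the It\^o isometry \eqref{eq:Ito:Isometry} (extended to the completion $\bm{B}^2_{\beta,\eta_m}$ by continuity, which is a point I would check first) gives, for $H_t(x):=\|u_t(x)\|_2^2$,
\begin{equation}
	H_t(x)=|(P_t\mu)(x)|^2+\int_0^t\d s\int_{-\infty}^\infty\d y\ [\tilde p_{t-s}(x-y)]^2\,\|\sigma(u_s(y))\|_2^2 .
\end{equation}
The hypothesis ${\rm L}_\sigma=\inf_z|\sigma(z)/z|>0$ means $|\sigma(w)|\ge{\rm L}_\sigma|w|$ for all $w$, so $\|\sigma(u_s(y))\|_2^2\ge{\rm L}_\sigma^2 H_s(y)$ pointwise, and I would replace the identity by the renewal \emph{inequality} $H\ge|P_\bullet\mu|^2+{\rm L}_\sigma^2\,Q*H$, where $Q_r(\cdot):=[\tilde p_r(\cdot)]^2$ and $*$ denotes space--time convolution.

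Next I would integrate this against $\e^{-\beta t}\,\d t$ and $\eta_m(\d x)=\e^{-|x|/m}\,\d x$ to collapse it to a scalar renewal inequality. The only delicate point is that $\eta_m$ is not translation invariant; here I would use the submultiplicativity $\e^{-|y+w|/m}\ge\e^{-|y|/m}\e^{-|w|/m}$ coming from the triangle inequality, which factors the spatial integral and peels off a clean \emph{time} convolution. Writing $B_m(t):=\int\eta_m(\d x)H_t(x)$, $\rho_m(r):=\int\e^{-|w|/m}Q_r(w)\,\d w$, and $D_m(t):=\int\eta_m(\d x)|(P_t\mu)(x)|^2$, one obtains $B_m\ge D_m+{\rm L}_\sigma^2\,\rho_m*B_m$ as functions of $t$. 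Taking Laplace transforms, using that $\int_0^\infty\e^{-\beta t}B_m(t)\,\d t=\mathcal{N}^2_{\beta,\eta_m}(u)^2$ is finite because $u\in\bm{B}^2_{\beta,\eta_m}$, together with the convolution rule for Laplace transforms, yields
\begin{equation}
	\mathcal{N}^2_{\beta,\eta_m}(u)^2\bigl(1-{\rm L}_\sigma^2\,\Upsilon_m(\beta)\bigr)\ \ge\ \widetilde D_m(\beta),\qquad \Upsilon_m(\beta):=\int_0^\infty\e^{-\beta r}\rho_m(r)\,\d r .
\end{equation}

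The right-hand side $\widetilde D_m(\beta)=\int_0^\infty\e^{-\beta t}D_m(t)\,\d t$ is strictly positive because $\mu\neq0$ forces $P_t\mu\not\equiv0$, and it is finite because $\|P_t\mu\|_{L^2}\le\|p_t\|_{L^2}\,|\mu|(\R)$ by Young's inequality, so that $\widetilde D_m(\beta)\le|\mu|(\R)^2\,\Upsilon(\beta)<\infty$; this is exactly where $|\mu|(\R)<\infty$ and Dalang's condition enter. Since $\mathcal{N}^2_{\beta,\eta_m}(u)^2<\infty$ and $\widetilde D_m(\beta)>0$, the displayed inequality forces $1-{\rm L}_\sigma^2\Upsilon_m(\beta)>0$, i.e.\ $\Upsilon_m(\beta)<{\rm L}_\sigma^{-2}$ for every finite $m$. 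Letting $m\to\infty$, monotone convergence gives $\rho_m(r)\uparrow\|\tilde p_r\|_{L^2(\R)}^2$ and hence $\Upsilon_m(\beta)\uparrow\Upsilon(\beta)$, so passing to the limit yields $\Upsilon(\beta)\le{\rm L}_\sigma^{-2}$.

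\emph{The main obstacle is upgrading this to the strict inequality.} The limiting step only produces ``$\le$'', and the boundary case $\Upsilon(\beta)={\rm L}_\sigma^{-2}$ is consistent with the renewal inequality precisely when $\mathcal{N}^2_{\beta,\eta_m}(u)\to\infty$ as $m\to\infty$, in which case the inequality degenerates and no contradiction appears at finite $m$. To remove the boundary I would run the renewal inequality directly against Lebesgue measure: its Laplace transform at frequency zero reads $\mathcal{N}^2_{\beta,\mathrm{Leb}}(u)^2\,(1-{\rm L}_\sigma^2\Upsilon(\beta))\ge D_\infty(\beta)>0$, which gives $\Upsilon(\beta)<{\rm L}_\sigma^{-2}$ outright \emph{provided} $\mathcal{N}^2_{\beta,\mathrm{Leb}}(u)<\infty$. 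This finiteness is automatic in the sharp case $\sigma(0)=0$, for which the admissible weight is $\eta=\d x$ itself and the intersection hypothesis already supplies $u\in\bm{B}^2_{\beta,\mathrm{Leb}}$; I expect the genuine difficulty to lie in the general signed-measure case with $\sigma(0)\neq0$, where the background noise makes $\mathcal{N}^2_{\beta,\mathrm{Leb}}(u)$ infinite, and one must instead exclude equality by quantifying the blow-up rate of $\mathcal{N}^2_{\beta,\eta_m}(u)$ forced by the renewal inequality against the growth actually permitted by \eqref{cond:u0}.
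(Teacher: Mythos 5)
Your computation follows the same route as the paper's: both exploit $z_2=1$ so that the stochastic Young inequality becomes an exact It\^o isometry, decompose $\|u_t(x)\|_2^2$ into the square of the deterministic term plus the second moment of the Walsh integral (they are orthogonal, as you say), lower-bound the latter by ${\rm L}_\sigma^2$ times the corresponding quantity for $u$, and integrate against $\e^{-\beta t}\,\d t\,\eta_m(\d x)$ using the submultiplicativity $\e^{-|x|/m}\ge \e^{-|x-y|/m}\e^{-|y|/m}$. The paper packages this in the norm $\mathcal{M}_\beta(Z)^2=\int_0^\infty\e^{-\beta t}\,\d t\int\e^{-|x|/m}\|Z_t(x)\|_2^2\,\d x$, which is your $\int_0^\infty\e^{-\beta t}B_m(t)\,\d t$; note it is only bounded \emph{above} by $\mathcal{N}^2_{\beta,\eta_m}(u)^2$, not equal to it as you write, but that is the direction you need for finiteness. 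Up to the conclusion $\Upsilon_m(\beta)<{\rm L}_\sigma^{-2}$ for each finite $m$, your argument is sound and is essentially the paper's.

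The genuine gap is exactly where you flag it: the theorem asserts a strict inequality for $\Upsilon(\beta)$ itself, your passage $m\to\infty$ yields only ``$\le$,'' and neither proposed repair is executed (the Lebesgue-weight version fails in the very case $\sigma(0)\neq 0$ you single out). The paper closes this by reversing the order of quantifiers: argue by contradiction and assume $\Upsilon(\beta)\ge {\rm L}_\sigma^{-2}$. Then at each \emph{fixed} $m$ the renewal inequality becomes $[\mathcal{M}_\beta(u)]^2\ge[\mathcal{M}_\beta(P_\bullet\mu)]^2+[\mathcal{M}_\beta(u)]^2$, and since $\mathcal{M}_\beta(u)\le\mathcal{N}^2_{\beta,\eta_m}(u)<\infty$ one may cancel and conclude $\mathcal{M}_\beta(P_\bullet\mu)=0$ for every $m>0$. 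Only then is $m\uparrow\infty$ taken---in this conclusion, a monotone statement about $\mu$ alone---giving $\int_0^\infty\e^{-\beta t}\|P_t\mu\|_{L^2(\R)}^2\,\d t=0$ and hence, by Plancherel, $\int_{-\infty}^\infty|\hat\mu(\xi)|^2(\beta+2\Re\Psi(\xi))^{-1}\,\d\xi=0$, so $\mu\equiv 0$, contradicting that $\mu$ is nonvoid. Because the $m$-limit never touches the inequality $\Upsilon_m(\beta)<{\rm L}_\sigma^{-2}$, no strictness is lost and no uniform-in-$m$ control of $\mathcal{N}^2_{\beta,\eta_m}(u)$ is needed. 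One caveat in your favour: for the contradiction to bite at finite $m$ the paper invokes the identity $\mathcal{M}_\beta(\tilde p*Z\dot W)=\sqrt{\Upsilon(\beta)}\,\mathcal{M}_\beta(Z)$, i.e.\ the factor $\Upsilon(\beta)$ where your more careful bookkeeping produces only ``$\ge\sqrt{\Upsilon_m(\beta)}$'' under the non-translation-invariant weight; if you adopt the paper's endgame you should still address how to upgrade $\Upsilon_m$ to $\Upsilon$ at fixed $m$, which is the one point your analysis shows is not automatic.
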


\begin{proof} Let $\mathcal{M}_{\beta}$ be the norm defined by
	\begin{equation}
		\mathcal{M}_\beta(Z) := 
		\left(\int_{0}^{\infty} \e^{-\beta t}\,\d t
		\int_{-\infty}^\infty \e^{-|x|/m}\,\d x\
		\left\| Z_t(x)\right\|_2^2\right)^{1/2}.
	\end{equation}
	Notice that $M_{\beta}$ is similar to $\mathcal{N}_{\beta,\eta_m}$ , but is
	missing a supremum on $Z$ in the space variable; cf.\ \eqref{norm}.
	Moreover, $\mathcal{M}_\beta(u)\le\mathcal{N}^2_{\beta,\eta_m}(u)<\infty$.
	Note that if $H,Z\in\bm{B}^2_{\beta,\eta_m}$ with one of 
	them---say $H$---random and the other one deterministic, then we have
	$[\mathcal{M}_\beta(H+G)]^2=
	[\mathcal{M}_\beta(H)]^2 +  [\mathcal{M}_\beta(G) ]^2$.
	This is a direct computation if $H,G\in\bm{W}^2$; the general case
	follows from approximation (we omit the details because the 
	method appears already
	during the course of the proof of Proposition \ref{pr:SI:Lip}).
	It follows that
	\begin{equation} \label{Mu}
		\left[ \mathcal{M}_\beta(u) \right]^2
		= \left[\mathcal{M}_\beta(P_\bullet\mu)\right]^2
		+\left[\mathcal{M}_\beta\left( \tilde{p}*(
		[\sigma\circ u]\dot{W})\right)\right]^2.
	\end{equation}
	The method of proof of Proposition \ref{pr:SI:Lip}, together with
	the simple bound, $\e^{-|x|/m}\ge\e^{-|x-y|/m}\cdot\e^{-|y|/m}$,
	shows also that
	\begin{equation}
		\mathcal{M}_\beta\left( \tilde{p}*([\sigma\circ u]\dot{W})\right) \ge
		{\rm L}_\sigma\mathcal{M}_\beta\left(\tilde{p}*u\dot{W}\right).
	\end{equation}
	But 
	\begin{equation}
		\mathcal{M}_\beta(\tilde{p}*Z\dot{W})=\left(\int_0^\infty
		\e^{-\beta t}\|p_t\|_{L^2(\R)}^2\,\d t\right)^{1/2}
		\cdot \mathcal{M}_\beta(Z).
	\end{equation}
	[Again one proves this first for nice $Z$'s and then take limits.] 
	Therefore,
	\begin{equation}
		\mathcal{M}_\beta\left( \tilde{p}*([\sigma\circ u]\dot{W})\right) \ge
		{\rm L}_\sigma\sqrt{\Upsilon(\beta)}
		\cdot \mathcal{M}_\beta(u).
	\end{equation}
	Combine this with \eqref{Mu} to find that
	\begin{equation}
		\left[\mathcal{M}_\beta(u) \right]^2
		\ge \left[\mathcal{M}_\beta (P_\bullet \mu)\right]^2
		+{\rm L}_\sigma^2\Upsilon(\beta)\left[\mathcal{M}_\beta(u)\right]^2.
	\end{equation}
	Now suppose, to the contrary, that $\Upsilon(\beta)\ge {\rm L}_\sigma^{-2}$.
	Then, it follows that $\mathcal{M}_\beta(P_\bullet \mu)=0$ regardless of
	the value of $m$; i.e., for all $m>0$,
	\begin{equation}
		\int_0^\infty\e^{-\beta t}\,\d t\int_{-\infty}^\infty\e^{-|x|/m}\,\d x\
		|(P_t\mu)(x)|^2 =0.
	\end{equation}
	Let $m\uparrow\infty$ and apply the monotone convergence theorem,
	and then the Plancherel theorem, in order to deduce that
	\begin{equation}\begin{split}
		0 & = \int_0^\infty\e^{-\beta t}\| P_t\mu \|_{L^2(\R)}^2\,\d t\\
		&= \frac{1}{2\pi}\int_0^\infty\e^{-\beta t}\,\d t\int_{-\infty}^\infty
			\d\xi\ \e^{-2t\Re\Psi(\xi)}|\hat\mu(\xi)|^2\\
		&=\frac{1}{2\pi}\int_{-\infty}^\infty 
			\frac{|\hat\mu(\xi)|^2}{\beta+2\Re\Psi(\xi)}\,
			\d\xi.
	\end{split}\end{equation}
	Since $\Psi$ is never infinite,
	the preceding implies that $\mu\equiv 0$, which is a contradiction. 
	It follows that $\Upsilon(\beta) < {\rm L}_\sigma^{-2}$.
\end{proof}

Theorem \ref{th:2} implies also that  Condition \ref{cond:beta:large} is sharp: 
Consider the case that 
${\rm Lip}_\sigma = {\rm L}_\sigma$. 
[This is the case, for instance, for the parabolic Anderson problem
where $\sigma(x) \propto x$, or when $\beta$ has sharp linear growth.] 
Then in this case Theorem \ref{th:1} and Theorem \ref{th:2} together imply 
that \eqref{cond:beta:large} is a necessary and sufficient condition 
for the existence of a weak solution to \eqref{heat} that has values in
$\cap_{m\ge 1} \bm{B}^k_{\beta,\eta_m}$.

\section{Examples and Remarks.}\label{sec:last}

\begin{example}[A parabolic Anderson model]
	Let $\sigma(x) = \lambda x$. In that case, the solution $u$ 
	corresponds to the conditional expected density at time $t \geq 0$ 
	of a branching L\'evy process starting with distribution $u_0$, given
	white-noise random branching. 
	The case that $\sigma(0) = 0$ and $u_0$ is a function with 
	compact support is studied in \cite{Conus_Khoshnevisan}, 
	in which intermittency properties are derived. 
	Here, $u_0$ can be a compactly supported measure (not necessarily a function). 
	If we let $\eta$ denote the one-dimensional Lebesgue measure,
	then \eqref{cond:u0} becomes
	\begin{equation} \label{anderson}
		\int_{-\infty}^{\infty} \frac{|\hat{\mu}(\xi)|^2}{\beta + 2 \Re\Psi(\xi)} \, \d\xi < \infty.
	\end{equation}
	For instance, if $\mu = \delta_0$, then condition \eqref{anderson} 
	is precisely Dalang's condition \eqref{Dalang}, and \eqref{heat} admits a weak solution. 
	In this way we can now define properly the solution of the parabolic Anderson 
	model with $u_0 = \delta_0$, which was studied in Bertini and Cancrini \cite{BC}.
	\qed
\end{example}

\begin{remark}[A nonlinear stochastic wave equation]
It is possible to apply similar techniques to the
study of the following nonlinear stochastic wave equation driven by the
Laplacian:
\begin{equation}\label{wave}
	\frac{\partial^2}{\partial t^2}u_t(x)= \kappa^2 (\Delta u_t)(x) +
	\sigma(u_t(x))\dot{W}(t\,,x) \qquad (t \geq 0, x \in \R).
\end{equation}
If the initial conditions $u_0$ and $v_0$ are nice functions, then the solution to \eqref{wave} can be written as
\begin{equation} \label{mild_wave}\begin{split}
	&u_t(x) = (\Gamma'_t * u_0)(x) + (\Gamma_t * v_0)(x)\\
	&\hskip1.8in+ \int_{[0,t]\times\R} \Gamma_{t-s}(y-x) \sigma(u_s(y))\,W(\d s\,\d y),
\end{split}\end{equation}
where $\Gamma$ is the fundamental solution of the 1-dimensional wave equation, namely
\begin{equation} \label{eq:green_wave}
	\Gamma_t(x) := \frac{1}{2} \1_{[-\kappa t,\kappa t]}(x)\qquad\text{for
	$t>0$ and $x\in\R$},
\end{equation}
and $\Gamma'_t$ denotes the weak spatial derivative of $\Gamma_t$.
Then, the existence and uniqueness of a weak solution to
\eqref{wave} in the case 
that $u_0$ and $v_0$ are (possibly signed) Borel measures 
on $\R$ can be established using the techniques of this paper,
since the definition of the generalized stochastic convolution 
applies as well with the 1-D wave propagator $\Gamma$ above.
The conditions on the initial conditions have to be adapted to 
insure that $\Gamma_t'*u_0$ and $\Gamma_t*v_0$ both are 
in $\bm{B}^k_{\beta,\eta}$, but are similar to \eqref{cond:u0}. 
The details on this are left to the reader, as the 
stochastic wave equation in dimension one has been 
widely studied already 
\cite{Cabana,CarmonaNualart,DalangCourse,Peszat,PZ,walsh,WalshNumerics}.
\end{remark}

\begin{small}

\noindent\\[2mm]
\emph{Authors' Address:}
Department of Mathematics, The University of Utah,
155 South 1400 East, JWB 233, Salt Lake City, Utah 84105--0090\\
\emph{Emails:} {\tt conus@math.utah.edu} \& {\tt davar@math.utah.edu}

\end{small}

\end{document}